\newtheorem{thm}{Theorem}[section]
\newtheorem{lem}[thm]{Lemma}
\newtheorem{prop}[thm]{Proppsition}
\theoremstyle{definition}
\newtheorem{conj}{Conjecture}
\newtheorem{prob}[conj]{Problem}
\newtheorem*{gprob}{General Problem}
\newcommand\F{\mathbb{F}}
\renewcommand\mod{\mathrm{mod}}
\title{Finite fields whose members are the sum of \\ a potent and a $5$-potent}
\author{Juncheng Zhou$^{1}$, Peter V.Danchev$^{2}$ and Hongfeng Wu$^{1}$\footnote{Corresponding author.}
\setcounter{footnote}{-1}
\footnote{E-mail addresses:
jczhoumath@gmail.com (J. Zhou), danchev@math.bas.bg (P.V. Danchev), whfmath@gmail.com (H. Wu)}
\\
{1.~College of Science, North China University of Technology, Beijing, China}\\
{2.~Bulgarian Academy of Sciences, Institute of Mathematics and Informatics, Sofia, Bulgaria}}
\date{}
\begin{document}
\maketitle

\bibliographystyle{abbrv}
\thispagestyle{plain}
\setcounter{page}{1}

\begin{abstract}
We show that there are only finitely many finite fields whose members are the sum of an $n$-potent element
and a $5$-potent element. Combining this with the algorithmic results provided by S.D. Cohen {\it et al.},
we confirm in the affirmative the conjecture in \cite{Cohen} concerning all finite fields satisfying this condition. Furthermore, we obtain several elementary results for General problem, proving that the number of finite fields satisfying general condition is also finite.
\end{abstract}

\section{Introduction and Principal Tools}

In ring and matrix theory, the decomposition of ring elements into sums of special elements -- such as idempotents,
tripotents and, more generally, potents -- has been a topic of sustained interest. Such structural questions naturally reduce to analogous problems over finite fields, where they become accessible through arithmetical and combinatorial methods. A fundamental problem in this direction is to determine when every element of a finite field can be expressed as the sum of two potents of prescribed exponents.

In this paper, let $\mathbb{F}_q$ denote the finite field of order a prime power $q=p^v$, where $p$ is a prime and $v$ is a positive integer. Further, let $n>1$ be an integer. We call $a\in \mathbb{F}_q$ is an $n$-potent if $a^n=a$ and we denote the set of all $n$-potents in $\mathbb{F}_q$ by $C_n$. Obviously, if $(q-1)\mid (n-1)$, then $C_n=\mathbb{F}_q$. Moreover, if $n_0=\gcd(n-1,q-1)+1$, then $C_{n_0}=C_n$. Hence, without loss of generality, we may assume that $(n-1)\mid (q-1)$ and $n>1$. The general problem is to classify those finite fields $\mathbb{F}_q$ for which
\[
\mathbb{F}_q = C_m + C_n
\]
holds, i.e., every element is a sum of an $m$-potent and an $n$-potent, where $m,n>1$ are arbitrary fixed integers.

\medskip

\begin{gprob}
Find those finite fields each of whose members is the sum of an $m$-potent and an $n$-potent, where $m>1$ and $n>1$.
\end{gprob}

\medskip

The discussion of the cases $m=2,3$ was given separately in \cite{Abyzov21},\cite{Abyzov24} using elementary finite field arguments. More recently, the deeper case $m=4$ was resolved in \cite{Cohen}, where character sum methods played a decisive role in obtaining a complete classification. Moreover, the following conjecture was raised by S.D. Cohen {\it et al.} in \cite{Cohen}.

\medskip

\begin{conj}
With the normal conditions, every element of the finite field $\F_q$ is a sum of a 5-potent and an $n$-potent only for the $(q,n)$ pairs $(3,2)$, $(5,2)$, $(5,3)$, $(7,4)$, $(9,3)$, $(9,5)$, $(13,5)$, $(13,7)$, $(17,9)$, $(25,9)$, $(25,13)$, $(29,15)$, $(41,21)$, $(49,25)$, $(53,27)$, $(73,37)$, $(81,41)$, and $(125,63)$.\label{conj1}
\end{conj}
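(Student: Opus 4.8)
The plan is to prove the finiteness assertion of the abstract directly, extracting an explicit bound $q_0$ beyond which no pair $(q,n)$ can satisfy $\F_q = C_n + C_5$, and then to feed the finitely many fields with $q \le q_0$ into the algorithmic criterion of Cohen et al., which should leave exactly the eighteen listed pairs. First I would reduce the possible shapes of $n$. Since $C_n + C_5 = \F_q$ forces $q = |C_n + C_5| \le |C_n|\,|C_5|$, and under the normal conditions $|C_n| = n$ while $|C_5| = \gcd(4,q-1) + 1 \le 5$, every admissible pair satisfies $n \ge q/5$. Hence $n-1$ is a divisor of $q-1$ that is at least $(q-5)/5$, so writing $q - 1 = (n-1)k$ forces $k \in \{1,2,3,4,5\}$ once $q$ exceeds a small constant. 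The value $k=1$ gives $C_n = \F_q$, the trivial case; it remains to rule out, for large $q$, the finitely many shapes $n - 1 = (q-1)/k$ with $k \in \{2,3,4,5\}$, subject to the compatibility between $k \mid q-1$ and the value of $\gcd(4,q-1)$ (which is exactly what confines the survivors to the residue classes appearing in the conjecture).

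The engine of the argument is a non-covering estimate via character sums. Write $A$ for the index-$k$ subgroup of $\F_q^{\times}$ (the group of $(q-1)/k$-th roots of unity), so that $C_n = \{0\} \cup A$, and $B$ for the group of $\gcd(4,q-1)$-th roots of unity, so that $C_5 = \{0\} \cup B$. Then $C_n + C_5 = \{0\} \cup A \cup B \cup (A+B)$, and an element $c$ fails to be represented precisely when $c \notin B$ and $c - \beta \notin A$ for every $\beta \in S := \{0\} \cup B$. Fixing a multiplicative character $\chi$ of exact order $k$, membership $x \in A$ is detected by $\chi(x) = 1$, so the set of unrepresented elements contains
\[
\Big\{\, c \in \F_q : \chi(c-\beta) \ne 1 \text{ for all } \beta \in S \,\Big\} \setminus B .
\]
I would show this set is nonempty for all large $q$ by counting. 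Using $\mathbf{1}[x \in A] = \tfrac1k \sum_{j=0}^{k-1}\chi^j(x)$ for $x \ne 0$ and expanding $\prod_{\beta \in S}\big(1 - \tfrac1k\sum_{j}\chi^j(c-\beta)\big)$, the main term is $q\,(1-1/k)^{|S|}$, whose coefficient is a fixed positive constant since $k \ge 2$ and $|S| = |C_5| \le 5$. Every remaining term is a character sum $\sum_c \chi\big(\prod_\beta (c-\beta)^{j_\beta}\big)$ with at least one $j_\beta \not\equiv 0 \pmod k$; as the $\beta$ are distinct and the exponents lie in $\{1,\dots,k-1\}$, the polynomial is not a perfect $k$-th power, so the Weil bound contributes $O(\sqrt q)$. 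Collecting terms gives a count of genuinely unrepresented elements of at least $q\,(1-1/k)^{|S|} - C_k\sqrt q - |C_5|$, which is positive once $q > q_0$ for an explicit $q_0$.

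It then follows that $\F_q = C_n + C_5$ can hold only for $q \le q_0$; together with the cardinality reduction this leaves finitely many candidate pairs $(q,n)$, and running each through the criterion of Cohen et al.\ should return exactly the pairs of Conjecture~\ref{conj1}. The hard part will be quantitative rather than structural: the Weil error constant $C_k$ scales with $(k-1)|S|$, so a crude union bound may push $q_0$ out of reach of the cited algorithm, and one must track the error sharply—isolating the few characters $\chi^{j_\beta}$ that actually occur and applying the exact Weil estimate $(\deg f - 1)\sqrt q$ term by term rather than bounding all terms at once. A secondary technical nuisance is the bookkeeping of the boundary contributions where $c = \beta$ and of the degenerate residue classes (for instance $q$ even, where $|C_5| = 2$ forces $k = 1$ and only the trivial case survives); these are $O(1)$ and do not affect the asymptotics, but they must be accounted for to certify that $q_0$ is genuinely an upper cutoff for the list.
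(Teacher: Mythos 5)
Your proposal is correct and follows essentially the same route as the paper: reduce via the cardinality constraint to $n-1=(q-1)/d$ with $d\le 5$, detect non-coverage by expanding a product of multiplicative-character indicators over $C_5$, bound every nontrivial term with Weil's estimate $(m-1)\sqrt{q}$ to get a positive count of unrepresented elements for $q$ beyond an explicit cutoff, and then invoke the Cohen et al.\ computation for $q\le 10000$. The quantitative worry you raise resolves exactly as you suggest: applying the Weil bound term by term yields an error of $49\sqrt{q}+O(1)$ (after normalization), giving the paper's cutoff $q>2809<10000$; the only cosmetic differences are that the paper dispatches $q\not\equiv 1 \pmod 4$ by citing earlier results and handles $d=5$ by a direct overlap-counting argument rather than uniformly by character sums.
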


\medskip

Explicit computations were undertaken for all $q\leq 10000$ by S.D. Cohen et al. The algorithm using Pari-gp \cite{Pari} can be found in Appendix of this paper or in \cite{Cohen}. Hence, if we can find an integer $M\leq 10000$ such that not every element of $\F_q$ can be the sum of a $5$-potent and an $n$-potent for all $q>M$, then the conjecture will follow. In fact, in Section \ref{2} we shall show that $M$ can be taken to be $2809$. Further, we obtain an upper bound for $q$ in General Problem. It manifestly implies that all solutions to General Problem can be found algorithmically for a fixed integer $m>1$.

\section{Proof of Conjecture 1}\label{2}

In fact, we only need to consider the case $q\equiv1(\mod\, 4)$, because one sees that $C_5=C_2$ when $q$ is even and $C_5=C_3$ when $q\equiv 3(\mod\, 4)$. The former has no nontrivial solutions, and the latter was settled by \cite[Theorem 1.1]{Cohen}, as follows.

\medskip

\begin{thm}
Suppose $q$ is a prime power and $n$ is a positive integer such that $n<q$ and $(n-1)\mid (q-1)$. Then, every element of $\F_q$ is the sum of a $3$-potent (that is, a tripotent) and an $n$-potent if and only if $n=(q-1)/2$ and $q\in\{3,5,7,9\}$.
\end{thm}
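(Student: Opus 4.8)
The plan is to exploit the extreme smallness of $C_3$. For odd $q$ one has $C_3=\{0,1,-1\}$, so
\[
C_3+C_n=C_n\cup(1+C_n)\cup(-1+C_n)
\]
is merely a union of three translates of $C_n$. Writing $C_n=\{0\}\cup H$, where $H$ is the unique subgroup of $\F_q^{\,*}$ of order $n-1$ (this uses $(n-1)\mid(q-1)$, which also gives $|C_n|=n$), the requirement $C_3+C_n=\F_q$ becomes: every $x$ lies in $\{0,\pm1\}$, $H$, $1+H$, or $-1+H$. Since for $x\notin\{0,\pm1\}$ all three of $x-1,x,x+1$ are nonzero, this is equivalent to saying that there is \emph{no} $x$ for which $x-1,\,x,\,x+1$ all lie outside $H$. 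I would first record this reformulation and treat the (small) even-$q$ case separately, where $C_3=\{0,1\}$ degenerates to two translates and the counting bound below already forces $q\le 4$.

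Next I would pin down $n$ by a counting argument. Covering $\F_q$ needs $|C_3+C_n|\ge q$, hence $3n\ge q$, so $n-1\ge (q-3)/3$; and since $n<q$ forces $n-1$ to be a \emph{proper} divisor of $q-1$, we get $n-1\le(q-1)/2$. As every divisor of $q-1$ in the window $[(q-3)/3,(q-1)/2]$ has the form $(q-1)/k$ with $k\in\{2,3\}$, the only possibilities are $n-1=(q-1)/2$ and, in a tight borderline case, $n-1=(q-1)/3$. In the principal case $n-1=(q-1)/2$ the subgroup $H$ is exactly the set of nonzero squares (quadratic residues), since that is the unique index-$2$ subgroup of the cyclic group $\F_q^{\,*}$; here $C_n=\{0\}\cup\mathrm{QR}$. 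The secondary case $n-1=(q-1)/3$ gives $3n=q+2$, leaving an overlap budget of only $2$ among the three translates, which the forced coincidences of $H,1+H,-1+H$ near $\{0,\pm1\}$ exhaust for all but the smallest $q$; I would dispose of it by this rigidity.

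For the main case the condition ``no $x$ with $x-1,x,x+1$ all non-residues'' is handled by the quadratic character $\chi$. Using $\tfrac12(1-\chi(t))$ as the indicator of a nonzero non-residue, the number of bad $x$ is
\[
\frac18\sum_{x\neq 0,\pm1}\bigl(1-\chi(x-1)\bigr)\bigl(1-\chi(x)\bigr)\bigl(1-\chi(x+1)\bigr),
\]
whose leading term is $q/8$ and whose remaining terms are Jacobsthal-type sums $\sum_x\chi(f(x))$ for $f$ a product of the linear factors, each bounded by $2\sqrt q$ via Weil's estimate. Thus the count of bad $x$ is $\tfrac{q}{8}+O(\sqrt q)$, which is strictly positive once $q$ exceeds an explicit threshold, so covering fails there. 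For the finitely many $q$ below the threshold I would check the condition directly, leaving exactly $q\in\{3,5,7,9\}$ (each with its corresponding $n$, for which $C_n$ is $\{0\}$ together with the quadratic residues).

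The hard part will be the character-sum step: making the Weil error fully explicit so that the resulting threshold is small enough to be cleared either by hand or by the stated finite computation, and carefully bookkeeping the boundary values $x\in\{0,\pm1,\pm2\}$ at which a linear factor vanishes or two of the factors collide, since these perturb the $q/8$ main term and must be tracked to obtain a clean, positive lower bound on the number of bad $x$.
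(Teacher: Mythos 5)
Your proposal targets a statement that the paper itself never proves: Theorem 2.1 is imported verbatim from \cite{Cohen} (it is Theorem 1.1 there; the introduction attributes the $m=3$ case to elementary finite-field arguments in \cite{Abyzov24}), and the paper's own character-sum work in Section 2 is reserved for the $5$-potent analogue. So there is no internal proof to compare against; what one can say is that your sketch transposes, correctly, the exact template the paper uses for its Theorems 2.3--2.5: pin down $n-1=(q-1)/d$ with $d$ small by a cardinality bound, kill the extreme value of $d$ by forced overlaps among the translates, and kill the remaining value(s) by a character sum estimated with Weil's theorem plus a finite verification. Your individual steps are sound. The reformulation of $C_3+C_n=\F_q$ (odd $q$) as ``no $x\notin\{0,\pm1\}$ with $x-1,x,x+1$ all outside $H$'' is right; the divisor-window argument does force $n-1\in\{(q-1)/2,(q-1)/3\}$ once $q>9$ (for $q\le 9$ one falls back on direct checks, harmless since those fields appear in the final list); and your ``rigidity'' disposal of $n-1=(q-1)/3$ genuinely works --- in fact for \emph{every} such $q$, not only large ones: since $q-1$ is even and $3$ is odd, $(q-1)/3$ is even, hence $\pm1\in H$, so $0$ lies in all three translates and $\pm1$ each lie in two, giving an overcount of at least $4$ against the available slack $3n-q=2$. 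This is precisely the paper's $d=5$ argument in Subsection 2.1, adapted. Your quadratic-character count of ``bad'' $x$ with main term $q/8$ parallels the paper's $S(2;q,5)$.

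The only substantive incompleteness is the one you flag yourself: as written, ``positive once $q$ exceeds an explicit threshold'' plus ``check the finitely many $q$ below it'' is a plan, not yet a proof, and the theorem's exact list $\{3,5,7,9\}$ depends on carrying it out. The bookkeeping is lighter than you fear: the three quadratic sums $\sum_{x}\chi\bigl((x-a)(x-b)\bigr)$ with $a\ne b$ are exactly $-1$ (no Weil needed), so only the cubic term costs $2\sqrt q$, and one gets
\[
8\cdot\#\{\text{bad }x\}\;\ge\;(q-3)-6-6-2\sqrt q\;=\;q-2\sqrt q-15,
\]
which is positive for all odd prime powers $q\ge 27$, leaving roughly ten fields to inspect by hand. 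One last remark: the statement's ``$n=(q-1)/2$'' must be read as $n-1=(q-1)/2$ (otherwise $q=3$ would force $n=1$, contradicting the pair $(3,2)$ in Conjecture \ref{conj1}); your reading is the correct one.
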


\medskip

From now on, suppose $q\equiv 1(\mod\,4)$, $n<q$ and $(n-1)\mid (q-1)$. Thus, $C_5=\{0,\pm 1,\pm \zeta\}$, where $\zeta=\sqrt{-1}$ in $\F_q$. The question which we explore here is to find pairs $(n,q)$ for which every element $\gamma$ of $\F_q$ can be written as $\alpha+\beta$, where $\alpha\in C_5$ and $\beta\in C_n$, i.e.,
\begin{equation}
\bigcup_{\alpha\in C_5}(C_n+\alpha)=\F_q.\label{eq1}
\end{equation}
Therefore, $n\geq q/5$. It follows that the possible value of $n$ are $1+(q-1)/d$, where $d\in \{2,3,4,5\}$.

\medskip

We now differ certain basic cases:

\subsection{$d=5$}

In conjunction with \eqref{eq1}, we start with the case for which $d=5$; thus, $n-1=(q-1)/5$ and $4\mid (q-1)$, it follows that $C_5\subseteq C_n$. Since $0\in C_n$, it must be that $\alpha\in C_n\cap (C_n+\alpha)$ for any $0\neq\alpha\in C_5$. Furthermore, $1+\zeta\in (C_n+1)\cap(C_n+\zeta)$. Note that the cardinality of each set $C_n+\alpha$ is exactly $n$, so we have
\[{\rm card}[\bigcup_{\alpha\in C_5}(C_n+\alpha)]\leq 5n-5=q-1<q.\]
Hence, \eqref{eq1} can not hold in this case.

\subsection{$d=2$}

Next, suppose $d=2$, i.e., $n-1=(q-1)/2$ and $C_n$ is the set of all squares (including 0) in $\F_q$. Then, \eqref{eq1} implies that, given any $\gamma\in\F_q$, at least one of $\gamma-\alpha,\alpha\in C_5$ is a square. We may express this fact via the multiplicative character in $\F_q$. Now, let $\chi_d$ denote a multiplicative character of order $d$, and we take $d=2$ here. Thus, given any $\gamma\in\F_q/C_5$, we obtain
\[\prod_{\alpha\in C_5}\left(1-\chi_2(\gamma-\alpha)\right)=0.\]
Setting
\[
S(2;q,5):=\sum_{\gamma\in\F_q/C_5}\prod_{\alpha\in C_5}\left(1-\chi_2(\gamma-\alpha)\right),
\]
then \eqref{eq1} forces that $S(2;q,5)=0$. In other words, if we can find an integer $M$ such that $S(2;q,5)>0$ for any $q>M$, then $M$ is an upper bound of $q$. To estimate $S(2;q,5)$, we first recall well-known Weil's theorem (see, e.g., \cite{Schmidt} for an elementary proof).

\medskip

\begin{lem}
Suppose $\chi$ is a multiplicative character of order $d>1$ in $\F_q$. Further, suppose that $f(x)\in \F_q[x]$ is a polynomial whose set of zeros in its splitting field over $\F_q$ has cardinality $m$, and that $f$ is not a constant multiple of a $d-th$ power. Then,
\[
\left|\sum_{\gamma\in\F_q}\chi(f(\gamma))\right|\leq (m-1)\sqrt{q}.
\]\label{2.2}
\end{lem}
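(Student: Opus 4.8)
The plan is to recognize $\sum_{\gamma\in\F_q}\chi(f(\gamma))$ as the ``defect'' in the number of rational points on the superelliptic curve $C:\,y^d=f(x)$, and then to invoke the Hasse--Weil bound (the Riemann hypothesis for curves over finite fields). Since a multiplicative character of order $d$ exists on $\F_q^{\times}$ only when $d\mid(q-1)$, the elementary root-counting identity $\#\{y\in\F_q:y^d=c\}=\sum_{j=0}^{d-1}\chi^j(c)$ is available for every $c\in\F_q^{\times}$. Summing this over $x\in\F_q$ and isolating the $Z\le m$ values of $x$ at which $f$ vanishes, the number of affine points of $C$ becomes $(q-Z)+\sum_{j=1}^{d-1}S_j$, where $S_j:=\sum_{x\in\F_q}\chi^j(f(x))$ with the convention $\chi^j(0)=0$. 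Thus the desired sum $S:=S_1$ is, up to explicit boundary terms, exactly the deviation of a point count from the expected value $q$.

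Next I would pass to the smooth projective model of $C$ and use the cyclic automorphism $(x,y)\mapsto(x,\zeta y)$, with $\zeta$ a primitive $d$-th root of unity, to split the action of Frobenius on cohomology into eigencomponents indexed by $j=0,1,\dots,d-1$. Equivalently, the numerator $P(t)=\prod_i(1-\alpha_i t)$ of the zeta function factors as $P(t)=\prod_{j=1}^{d-1}P_j(t)$, and comparing the two expressions for $\#C(\F_q)$ identifies $S_j=-\sum_i\alpha_{j,i}$, where the $\alpha_{j,i}$ are the reciprocal roots of $P_j$. By Hasse--Weil each $|\alpha_{j,i}|=\sqrt q$, so $|S|\le\deg(P_1)\cdot\sqrt q$, and everything reduces to proving $\deg(P_1)\le m-1$. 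To get this I would compute the genus of $C$ by Riemann--Hurwitz applied to the degree-$d$ cover $C\to\mathbb{P}^1$ given by $x$, whose ramification is supported on the $m$ distinct roots of $f$ together with the point at infinity; the hypothesis that $f$ is not a constant multiple of a $d$-th power is exactly what excludes the degenerate case $f=c\,g^d$ (for which $\chi(f(x))=\chi(c)$ forces $|S|\sim q$), ensuring that the eigencomponent carrying $\chi$ contributes genuine $H^1$ rather than a principal-character sum.

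The main obstacle is obtaining the \emph{sharp} constant $m-1$ rather than a larger multiple of $m$: a naive genus estimate is too lossy, so one genuinely needs the $\mu_d$-equivariant decomposition of $H^1$, equivalently the factorization of the $L$-function of $C$ into the Jacobi-sum $L$-functions $L(\chi^j,t)$ together with the fact that $\deg L(\chi,t)\le m-1$. The local factors at the ramified points and at infinity must be tracked carefully, and they behave differently according to $\gcd(d,\deg f)$ and the multiplicities of the roots of $f$; this bookkeeping is where the argument is most likely to go astray.

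For this reason I would, following \cite{Schmidt}, replace the cohomological input by Stepanov's elementary method: construct a nonzero auxiliary polynomial of controlled degree that vanishes to high order at the $\F_q$-points of $C$, thereby forcing an upper bound $N\le q+c\sqrt q$ on the point count; obtain the matching lower bound by working over the quadratic extension $\F_{q^2}$; and finally convert the resulting two-sided point estimate into the character-sum bound using only the rationality of the zeta function and a positivity argument. This route reaches the same conclusion $\bigl|\sum_{\gamma\in\F_q}\chi(f(\gamma))\bigr|\le(m-1)\sqrt q$ while avoiding algebraic geometry entirely, at the cost of a longer and more computational argument; the delicate step there is the dimension count guaranteeing a suitable auxiliary polynomial exists with the prescribed high-order vanishing.
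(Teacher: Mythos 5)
The first thing to note is that the paper does not prove this lemma at all: it is Weil's theorem, quoted as known with a pointer to \cite{Schmidt} for an elementary proof. So the only comparison available is between your sketch and the standard literature, and on that score your proposal is a roadmap rather than a proof: every step that carries the actual content of the lemma is invoked, deferred, or explicitly flagged as ``where the argument is most likely to go astray,'' but never carried out. The crux you correctly isolate --- that the $\chi$-eigencomponent of the zeta numerator, equivalently the $L$-function $L(\chi,t)$, has degree at most $m-1$ --- is precisely the statement in which the two hypotheses (the number $m$ of \emph{distinct} roots, and $f$ not a constant multiple of a $d$-th power) do their work, and it is the one step you do not prove. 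The ramification bookkeeping is not a side issue one can wave at: whether a root of multiplicity $d_i$ ramifies depends on $d\mid d_i$, whether infinity ramifies depends on $d\mid\deg f$, each case changes the degree of $L(\chi,t)$ by one, and the hypothesis on $f$ is exactly what guarantees the character $\chi\circ f$ is ramified somewhere (you note this qualitatively, but never turn it into a dimension count). Asserting Hasse--Weil plus ``$\deg P_1\le m-1$'' and stopping there defers the entire lemma.

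There are also concrete technical slips. The identification $S_j=-\sum_i\alpha_{j,i}$ cannot be obtained, as you claim, by ``comparing the two expressions for $\#C(\F_q)$'': comparing point counts only yields the aggregate $\sum_{j=1}^{d-1}S_j$ in terms of $\sum_i\alpha_i$; separating the individual eigencomponents requires the equivariant trace formula (Frobenius twisted by the $\mu_d$-action) or a direct Euler-product argument for each $L(\chi^j,t)$ --- exactly the machinery you are trying to avoid spelling out. Moreover, ``up to explicit boundary terms'' is not harmless here: the lemma's bound $(m-1)\sqrt q$ carries no additive error term, so the $O(1)$ discrepancies coming from points at infinity, from singular points of the affine model when $f$ has repeated roots, and from the $Z$ points with $y=0$ (which your affine count $(q-Z)+\sum_{j\ge1}S_j$ drops; the correct count is $q+\sum_{j\ge1}S_j$) must be shown to cancel exactly, not merely be bounded. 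Finally, your fallback via Stepanov's method is the route the paper's own citation points to, but one paragraph describing auxiliary polynomials, the passage to $\F_{q^2}$, and ``a positivity argument'' is a description of the contents of \cite{Schmidt}, not a proof. As it stands, the proposal would be acceptable as an annotated citation --- which is exactly how the paper treats the lemma --- but not as a self-contained proof of it.
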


\medskip

Now, we expand $S(2;q,5)$ thus:
\[\begin{aligned}
S(2;q,5)=\sum_{\gamma\in\F_q/C_5}\Biggl(1&-\sum_{\alpha\in C_5}\chi_2(\gamma-\alpha)+\sum_{\{\alpha_1,\alpha_2\}\subseteq C_5}\chi_2\left((\gamma-\alpha_1)(\gamma-\alpha_2)\right)\\
&-\sum_{\{\alpha_1,\alpha_2,\alpha_3\}\subseteq C_5}\chi_2\left((\gamma-\alpha_1)(\gamma-\alpha_2)(\gamma-\alpha_3)\right)\\
&+\sum_{\{\alpha_1,\alpha_2,\alpha_3,\alpha_4\}\subseteq C_5}\chi_2\left((\gamma-\alpha_1)(\gamma-\alpha_2)(\gamma-\alpha_3)(\gamma-\alpha_4)\right)\\
&-\chi\left(\prod_{\alpha\in C_5}(\gamma-\alpha)\right)\Biggr).
\end{aligned}\]
Utilizing Lemma \ref{2.2}, we derive lower bounds for each term above. That is,
\[\sum{\alpha\in C_5}\sum_{\gamma\in\F_q}\chi_2(\gamma-\alpha)\geq-\binom{5}{1}\cdot 5=-25,\]
\[\sum_{\{\alpha_1,\alpha_2\}\subseteq C_5}\sum_{\gamma\in\F_q}\chi_2\left((\gamma-\alpha_1)(\gamma-\alpha_2)\right)\geq-\binom{5}{2}((2-1)\sqrt{q}+5)=-10\sqrt{q}-50,\]
\[-\sum_{\{\alpha_1,\alpha_2,\alpha_3\}\subseteq C_5}\sum_{\gamma\in\F_q}\chi_2\left((\gamma-\alpha_1)(\gamma-\alpha_2)(\gamma-\alpha_3)\right)\geq-\binom{5}{3}((3-1)\sqrt{q}+5)=-20\sqrt{q}-50,\]
\[\sum_{\{\alpha_1,\alpha_2,\alpha_3,\alpha_4\}\subseteq C_5}\sum_{\gamma\in\F_q}\chi_2\left((\gamma-\alpha_1)(\gamma-\alpha_2)(\gamma-\alpha_3)(\gamma-\alpha_4)\right)\geq-\binom{5}{4}((4-1)\sqrt{q}+5)=-15\sqrt{q}-25,\]
and therefore
\[-\sum_{\gamma\in\F_q}\chi\left(\prod_{\alpha\in C_5}(\gamma-\alpha)\right)\geq-\binom{5}{5}((5-1)\sqrt{q}+5)=-4\sqrt{q}-5.\]
Hence,
\begin{equation}S(2;q,5)\geq q-49\sqrt{q}-160.
\end{equation}

We next take $M=53^2=2809$ and $S(2;q,5)>0$ whenever $q>2809$. This yields the following result.

\medskip

\begin{thm}
Let $q\equiv1(\mod 4)$ and $n=(q-1)/2$. Suppose $q>2809$, then not every element of $\F_q$ can be the sum of a $5$-potent and an $n$-potent.\label{2.3}
\end{thm}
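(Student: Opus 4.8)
The plan is to argue by contradiction, converting the covering condition \eqref{eq1} into the single character-sum identity $S(2;q,5)=0$ and then showing that the lower bound assembled above forces $S(2;q,5)>0$ once $q$ is large. First I would record the reduction: since $n=(q-1)/2$ makes $C_n$ exactly the squares of $\F_q$ together with $0$, the requirement that every $\gamma$ be a sum $\alpha+\beta$ with $\alpha\in C_5$ and $\beta\in C_n$ says that for each $\gamma$ some $\gamma-\alpha$ is a square. For $\gamma\in C_5$ this is automatic (take $\beta=0$), so only $\gamma\in\F_q\setminus C_5$ matter; for such $\gamma$ some $\gamma-\alpha$ must be a \emph{nonzero} square, i.e. $\chi_2(\gamma-\alpha)=1$, which kills the factor $1-\chi_2(\gamma-\alpha)$. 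Hence every summand of $S(2;q,5)$ vanishes and \eqref{eq1} implies $S(2;q,5)=0$.

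Next I would expand the product $\prod_{\alpha\in C_5}(1-\chi_2(\gamma-\alpha))$ by multiplying out, producing the alternating elementary symmetric sums of the $\chi_2(\gamma-\alpha)$ displayed above, and estimate each inner sum $\sum_{\gamma}\chi_2\big(\prod_{i}(\gamma-\alpha_i)\big)$ by Weil's bound (Lemma \ref{2.2}). This is the one place that needs care, and I expect it to be the main technical obstacle: Lemma \ref{2.2} is stated for the full sum over $\F_q$, whereas the sums defining $S(2;q,5)$ run over $\F_q\setminus C_5$. I would therefore write each restricted sum as the complete sum over $\F_q$ minus the at most five terms indexed by $C_5$, bounding the latter trivially by $|C_5|=5$ in absolute value (note $\chi_2(0)=0$ causes no difficulty). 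One must also verify the hypothesis of Lemma \ref{2.2}: for a $k$-element subset $\{\alpha_1,\dots,\alpha_k\}\subseteq C_5$ the polynomial $\prod_i(x-\alpha_i)$ is squarefree and nonconstant, hence not a constant multiple of a square, so Weil applies with exactly $m=k$ distinct zeros and yields the factor $(k-1)\sqrt q$. Collecting the $\binom{5}{k}$ subsets of each size together with the $+5$ corrections reproduces the five displayed bounds and, after adding the constant term $\sum_{\gamma\in\F_q\setminus C_5}1=q-5$, gives the estimate $S(2;q,5)\ge q-49\sqrt q-160$ recorded above.

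Finally I would deduce the threshold. Writing $t=\sqrt q$, positivity of $S(2;q,5)$ is guaranteed once $t^2-49t-160>0$; the larger root of this quadratic is $(49+\sqrt{3041})/2<53$, so $t>53$, i.e. $q>2809=53^2$, forces $S(2;q,5)>0$. This contradicts the identity $S(2;q,5)=0$ obtained from \eqref{eq1}. Hence for $q\equiv1\pmod4$ with $q>2809$ the covering \eqref{eq1} fails, which is precisely the assertion that not every element of $\F_q$ is a sum of a $5$-potent and an $n$-potent with $n=(q-1)/2$. The conceptual content lies entirely in the character-sum reduction and the Weil estimate; once those are in hand the argument closes by an elementary quadratic inequality, so the only real bookkeeping is the boundary correction for summing over $\F_q\setminus C_5$.
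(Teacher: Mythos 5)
Your proposal is correct and takes essentially the same route as the paper: convert the covering condition \eqref{eq1} into the identity $S(2;q,5)=0$, expand the product, estimate each term by Weil's bound (Lemma \ref{2.2}) with a $+5$ correction for restricting the sum to $\F_q\setminus C_5$, and conclude $S(2;q,5)\ge q-49\sqrt{q}-160>0$ for $q>2809$, a contradiction. Your write-up is in fact a bit more explicit than the paper's (checking the squarefree hypothesis for Weil and the quadratic-root computation behind the threshold $53^2$), but the argument is the same.
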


\subsection{$d=3$}

Subsequently, suppose $d=3$, i.e., $n-1=(q-1)/3$ and $C_n$ is the set of all cubes in $\F_q$. Mimicking the same idea as in the previous section, given any $\gamma\in\F_q$, at least one of $\gamma-\alpha,\alpha\in C_5$ is a cube. Let $\chi_3$ be a $3$-order multiplicative character over $\F_q$. So, we deduce
\[1+\chi_3(\gamma)+\chi_3(\gamma^2)=\begin{cases}
3 & \text{if}\, \gamma\in C_n/\{0\},\\
1 & \text{if}\, \gamma=0,\\
0 & \text{others}.
\end{cases}\]
Thus, given any $\gamma\in \F_q/C_5$, we deduce
\[\prod_{\alpha\in C_5}(2-\chi_3(\gamma-\alpha)-\chi_3(\gamma-\alpha)^2)=0.\]
Setting
\[S(3;q,5):=\sum_{\gamma\in\F_q/C_5}\prod_{\alpha\in C_5}(2-\chi_3(\gamma-\alpha)-\chi_3(\gamma-\alpha)^2),\]
by expanding this we infer
\[\begin{aligned}
S(3;q,5)=\sum_{\gamma\in\F_q/C_5}\Biggl(2^5&-2^4\sum_{\alpha\in C_5}\left(\chi_3(\gamma-\alpha)+\chi_3((\gamma-\alpha)^2)\right)\\
&+2^3\sum_{\{\alpha_1,\alpha_2\}\subseteq C_5}\left(\chi_3(\gamma-\alpha_1)+\chi_3((\gamma-\alpha_1)^2)\right)\left(\chi_3(\gamma-\alpha_2)+\chi_3((\gamma-\alpha_2)^2)\right)\\
&-\cdots\\
&-\prod_{\alpha\in C_5}\left(\chi_3(\gamma-\alpha)+\chi_3((\gamma-\alpha)^2)\right)
\Biggr).\\
\end{aligned}\]

Unlike in the preceding subsection, we cannot obtain lower bounds for each term here as they may not be real. However, we can still use Lemma \ref{2.2} to estimate their modulus. For example, one finds that
\[\begin{aligned}
\Biggl|\sum_{\gamma\in\F_q/C_5}(\chi_3&(\gamma-\alpha_1)+\chi_3((\gamma-\alpha_1)^2))(\chi_3(\gamma-\alpha_2)+\chi_3((\gamma-\alpha_2)^2))\Biggr|\\
&\leq\Biggl|\sum_{\gamma\in\F_q}(\chi_3(\gamma-\alpha_1)+\chi_3((\gamma-\alpha_1)^2))(\chi_3(\gamma-\alpha_2)+\chi_3((\gamma-\alpha_2)^2))\Biggr|+2^2\cdot 5\\
&\leq2^2((2-1)\sqrt{q}+5)\\&=2^2(\sqrt{q}+5).
\end{aligned}\]
The bounds for the modulus of terms are, respectively, $2\cdot 5$, $2^2(\sqrt{q}+5)$, $2^3(2\sqrt{q}+5)$, $2^4(3\sqrt{q}+5)$, $2^5(4\sqrt{q}+5)$.
Since $S(3;q,5)$ is obviously real, then we can explicitly write the lower bound of $S(3;q,5)$, that is,
\begin{equation}S(3;q,5)\geq2^5(q-49\sqrt{q}-160).\end{equation}
Hence, $S(3;q,5)>0$ whenever $q>2809$. Thus, we document the following assertion.

\medskip

\begin{thm}
Let $q\equiv1(\mod 4)$ and $n=(q-1)/3$. Suppose $q>2809$, then not every element of $\F_q$ can be the sum of a $5$-potent and an $n$-potent.\label{2.4}
\end{thm}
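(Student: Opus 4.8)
The plan is to mirror exactly the character-sum argument already carried out for $d=2$ in Theorem~\ref{2.3}, since the case $d=3$ has the identical structure: we must show that for $\gamma\in\F_q/C_5$, the requirement that at least one of the five shifts $\gamma-\alpha$ (for $\alpha\in C_5$) be a cube forces a character-sum quantity $S(3;q,5)$ to vanish, and then produce a threshold $M$ beyond which $S(3;q,5)>0$, contradicting \eqref{eq1}. The detector for ``$\gamma-\alpha$ is a nonzero cube'' is the indicator $1+\chi_3(\gamma-\alpha)+\chi_3((\gamma-\alpha)^2)$, which equals $3$ on nonzero cubes, $1$ at $0$, and $0$ otherwise; hence the product $\prod_{\alpha\in C_5}\bigl(2-\chi_3(\gamma-\alpha)-\chi_3((\gamma-\alpha)^2)\bigr)$ vanishes precisely when some shift is a cube, and summing over $\gamma\in\F_q/C_5$ gives $S(3;q,5)=0$ under \eqref{eq1}.

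First I would expand the product multinomially, grouping terms by how many factors $\alpha_i$ contribute a nontrivial character, which produces a leading constant $2^5 q$ (up to the $C_5$-deletion) together with binomial-weighted character sums of the form $\sum_{\gamma}\prod_{i}\chi_3^{e_i}(\gamma-\alpha_i)$ with each exponent $e_i\in\{1,2\}$. Each such sum is of the shape $\sum_\gamma \chi_3\bigl(f(\gamma)\bigr)$ where $f(x)=\prod_i (x-\alpha_i)^{e_i}$; since the $\alpha_i$ are distinct and each $e_i\in\{1,2\}$, $f$ is never a perfect cube times a constant, so Lemma~\ref{2.2} applies with $m$ equal to the number $k$ of distinct roots, giving modulus at most $(k-1)\sqrt q$. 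The contribution from excluding the five points of $C_5$ from the summation range adds at most $2^{5-k}\cdot 5$ per grouped term, exactly as in the displayed two-factor estimate.

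The key bookkeeping step is to collect these bounds across all $k$ from $1$ to $5$: the $k$-th layer carries a factor $2^{5-k}$ from the expansion, a $\binom{5}{k}$ count of index subsets, a $2^k$ from the two choices of exponent per factor, and a Weil bound $(k-1)\sqrt q+5$. Multiplying through, one checks that each layer's total matches $2^5$ times the corresponding term in the $d=2$ estimate, so the aggregate modulus bound is $2^5\bigl(49\sqrt q+160\bigr)$, and since $S(3;q,5)$ is real with leading term $2^5 q$ we obtain $S(3;q,5)\ge 2^5\bigl(q-49\sqrt q-160\bigr)$, which is the claimed inequality. Solving $q-49\sqrt q-160>0$ gives $\sqrt q>53$ (approximately), so taking $M=53^2=2809$ forces $S(3;q,5)>0$ for $q>2809$, contradicting \eqref{eq1} and completing the proof.

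The main obstacle I anticipate is purely the combinatorial accounting in the middle step: one must verify that the factor $2^5$ cleanly factors out of every layer so that the final bound is exactly $2^5$ times the $d=2$ bound. This is not automatic a priori, because the exponent choices $e_i\in\{1,2\}$ double the number of terms at each layer relative to the $d=2$ case, and one has to confirm that this doubling is precisely absorbed by the $2^{5-k}\cdot 2^k=2^5$ bookkeeping while the Weil bound $(k-1)\sqrt q$ is unaffected (a quadratic factor $(x-\alpha_i)^2$ still contributes only one distinct root, so $m=k$ regardless of the exponents). Once that alignment is confirmed, the rest is a transcription of the $d=2$ arithmetic.
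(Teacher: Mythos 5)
Your proposal is correct and takes essentially the same route as the paper's own proof: the same cube indicator $2-\chi_3(\gamma-\alpha)-\chi_3((\gamma-\alpha)^2)$, the same sum $S(3;q,5)$ over $\gamma\in\F_q/C_5$, the same application of Lemma \ref{2.2} (with $m=k$ distinct roots and the observation that exponents in $\{1,2\}$ prevent $f$ from being a constant times a cube), and the identical final bound $S(3;q,5)\ge 2^5\left(q-49\sqrt{q}-160\right)$ with threshold $M=2809$. Your bookkeeping, which folds the $C_5$-deletion of the main term ($2^5\cdot 5$) into the aggregate error ($2^5\cdot 155+2^5\cdot 5=2^5\cdot 160$), is self-consistent and reproduces the paper's estimate exactly.
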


\subsection{$d=4$}

Finally, suppose $d=4$ and let $\chi_4$ be a 4-order multiplicative character. Then, for any $\gamma\in \F_q/C_5$,
\[\prod_{\alpha\in C_5}(3-\chi_4(\gamma-\alpha)-\chi_4((\gamma-\alpha)^2)-\chi_4((\gamma-\alpha)^3))=0.\]
Put
\[S(4;q,5):=\sum_{\gamma\in \F_q/C_5}\prod_{\alpha\in C_5}(3-\chi_4(\gamma-\alpha)-\chi_4((\gamma-\alpha)^2)-\chi_4((\gamma-\alpha)^3)),\]
By an argument similar to that in the previous section, we infer
\begin{equation}S(4;q,5)\geq 3^5(q-49\sqrt{q}-160)\end{equation}
and so the following statement is true.

\medskip

\begin{thm}
Let $q\equiv1(\mod 4)$ and $n=(q-1)/4$. Suppose $q>2809$, then not every element of $\F_q$ can be the sum of a $5$-potent and an $n$-potent.\label{2.5}
\end{thm}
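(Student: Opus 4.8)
The plan is to follow the same character-sum strategy that succeeded for $d=2$ and $d=3$, adapting it to the order-$4$ character $\chi_4$. First I would justify the indicator identity: since $1+\chi_4(\gamma)+\chi_4(\gamma^2)+\chi_4(\gamma^3)$ equals $4$ when $\gamma$ is a nonzero fourth power, equals $1$ when $\gamma=0$, and vanishes otherwise, the factor $3-\chi_4(\gamma-\alpha)-\chi_4((\gamma-\alpha)^2)-\chi_4((\gamma-\alpha)^3)$ is zero exactly when $\gamma-\alpha$ is a nonzero fourth power (i.e.\ a nonzero $n$-potent). Hence if \eqref{eq1} holds, then for every $\gamma\in\F_q/C_5$ at least one factor vanishes, so the product is $0$ and therefore $S(4;q,5)=0$. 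The theorem will then follow by showing $S(4;q,5)>0$ for $q>2809$, which forces \eqref{eq1} to fail.

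Next I would expand the product $\prod_{\alpha\in C_5}(3-\sum_{j=1}^{3}\chi_4((\gamma-\alpha)^j))$ by the multinomial/subset expansion, exactly mirroring the $d=3$ computation. The leading term contributes $3^5$ per $\gamma$, giving a main contribution of $3^5\cdot(\#(\F_q/C_5))$, which is of size roughly $3^5 q$. Each remaining term is a sum over $\gamma$ of products of $\chi_4$ evaluated at powers of distinct linear factors $(\gamma-\alpha_i)$; grouping by how many factors appear, the $k$-th symmetric term is a $\binom{5}{k}$-fold sum whose summand is $\chi_4$ applied to a polynomial of degree at most $3k$ but with exactly $k$ distinct roots. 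Since such a product is not a constant multiple of a fourth power (the distinct linear factors carry exponents in $\{1,2,3\}$, none divisible by $4$), Lemma \ref{2.2} applies with $m=k$ and yields the modulus bound $(k-1)\sqrt{q}$ over the full field $\F_q$, plus a correction of at most $5$ for deleting the $5$ elements of $C_5$. The binomial coefficient $3^{5-k}\binom{5}{k}$ weights each term, and after summing the absolute values one recovers the same arithmetic structure as before, producing the clean lower bound $S(4;q,5)\ge 3^5(q-49\sqrt{q}-160)$.

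The key arithmetical point that makes the bound land on the identical expression $q-49\sqrt{q}-160$ is that the factor $3^{5-k}$ in front of each $k$-term and the internal factor $3^{k}$ (from expanding $(\chi_4+\chi_4^2+\chi_4^3)^{k}$ into at most $3^k$ character products, each of modulus bounded by Weil) combine so that every level contributes with total weight $3^5$ after the overall $3^5$ is factored out. Concretely, the $k$-th level gives a total modulus bound $3^{5-k}\binom{5}{k}\cdot 3^{k}\big((k-1)\sqrt{q}+5\big)=3^5\binom{5}{k}\big((k-1)\sqrt{q}+5\big)$, and summing $\binom{5}{k}(k-1)$ and $\binom{5}{k}$ over $k=1,\dots,5$ reproduces the coefficients $49$ and $32+\dots$ seen in the $d=2$ case, whence the factored-out $3^5$. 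Because $S(4;q,5)$ is manifestly real (it is a sum over $\gamma$ of a product of terms each invariant under the conjugation $\chi_4\mapsto\overline{\chi_4}=\chi_4^3$), the modulus bounds transfer directly to a lower bound on the real quantity.

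The main obstacle I anticipate is bookkeeping rather than any deep difficulty: one must verify carefully that at \emph{every} level $k$ the relevant polynomial $\prod_{i}(\gamma-\alpha_i)^{e_i}$ with $e_i\in\{1,2,3\}$ is never a perfect fourth power up to a constant, so that Lemma \ref{2.2} is legitimately applicable with exactly $m=k$ distinct roots. Since each individual linear factor appears with an exponent strictly between $1$ and $3$, and the $\alpha_i$ are distinct, no factor can carry a multiplicity divisible by $4$, so the hypothesis of Lemma \ref{2.2} is satisfied throughout; this is the one place where the argument genuinely uses that $\chi_4$ has order $4$ while the exponents range only up to $3$. Granting this, the numerical bound $q-49\sqrt{q}-160>0$ for $q>2809$ is identical to the $d=2$ and $d=3$ cases, and Theorem \ref{2.5} follows at once.
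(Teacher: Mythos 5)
Your proposal is correct and takes essentially the same route as the paper: the paper's own proof of Theorem \ref{2.5} merely invokes ``an argument similar to that in the previous section'' and states the bound $S(4;q,5)\geq 3^5(q-49\sqrt{q}-160)$, which is precisely what you derive, including the key bookkeeping identity $3^{5-k}\binom{5}{k}\cdot 3^k\bigl((k-1)\sqrt{q}+5\bigr)=3^5\binom{5}{k}\bigl((k-1)\sqrt{q}+5\bigr)$ and the sums $\sum_{k=1}^{5}\binom{5}{k}(k-1)=49$ and $5+5\sum_{k=1}^{5}\binom{5}{k}=160$. Your explicit check that each polynomial $\prod_i(\gamma-\alpha_i)^{e_i}$ with $e_i\in\{1,2,3\}$ and distinct $\alpha_i$ is never a constant multiple of a fourth power (so that Lemma \ref{2.2} applies with $m=k$), and that each factor is invariant under $\chi_4\mapsto\chi_4^3$ (so $S(4;q,5)$ is real), correctly fills in the details the paper leaves implicit.
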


\subsection{Completion of proof}

Explicit computations were undertaken for all $q\leq 10000$ by S.D. Cohen {\it et al.}, and we explicitly cite their codes in Appendix A. This completely covers the gap given by Theorem \ref{2.3},\ref{2.4} and \ref{2.5}.

\section{An Upper Bound of $q$ in General Problem}\label{3}

In this section, we assume that $m>2$ and $q$ is a prime power such that $(m-1)|(q-1)$. Obviously, this assumption does not restrict the generality of the explored question. Our purpose here is to prove the following theorem.

\medskip

\begin{thm}
For a fixed $m$, suppose $(m-1)\mid (q-1)$. Then, there exists a positive integer $M$ such that not every element of $\F_q$ can be the sum of an $m$-potent and an $n$-potent whenever $q>M$. More specifically, we can take $M=(2^{m}m)^2$.\label{3.1}
\end{thm}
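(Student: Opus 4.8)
The plan is to run the argument of Section~\ref{2} with $5$ replaced by the fixed integer $m$, the only genuinely new ingredient being a clean reduction to finitely many admissible exponents $n$. Under the standing hypotheses one has $|C_m| = m$ and $|C_n| = n$, so a necessary condition for $\F_q = C_m + C_n$ is $mn \ge q$, that is $n \ge q/m$. Writing $n - 1 = (q-1)/d$ with $d \mid (q-1)$, and noting that $d = 1$ is excluded (it forces $n = q$), the inequality $n \ge q/m$ becomes
\[
d \le \frac{m(q-1)}{q-m} = m + \frac{m(m-1)}{q-m}.
\]
Hence as soon as $q > m^2$ we must have $d \in \{2, 3, \ldots, m\}$, a finite set independent of $q$. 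It therefore suffices to rule out, for all sufficiently large $q$, each value $d$ in this range with $d \mid (q-1)$.

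Fix such a $d$ and let $\chi_d$ be a multiplicative character of order $d$, which exists because $d \mid (q-1)$. For $\beta \ne 0$ the orthogonality relation gives $\sum_{j=0}^{d-1}\chi_d^j(\beta) = d$ when $\beta$ is a $d$-th power and $0$ otherwise, so the quantity $(d-1) - \sum_{j=1}^{d-1}\chi_d^j(\beta)$ is real and equals $0$ precisely when $\beta$ is a nonzero $d$-th power, and equals $d$ otherwise. Since $C_n = \{0\} \cup \{\,d\text{-th powers}\,\}$, and $\gamma - \alpha \ne 0$ for every $\alpha \in C_m$ once $\gamma \notin C_m$, the covering identity \eqref{eq1} (with $C_5$ replaced by $C_m$) forces the product $\prod_{\alpha \in C_m}\bigl((d-1) - \sum_{j=1}^{d-1}\chi_d^j(\gamma-\alpha)\bigr)$ to vanish for each $\gamma \notin C_m$. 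Consequently, setting
\[
S(d;q,m) = \sum_{\gamma \in \F_q \setminus C_m}\ \prod_{\alpha \in C_m}\Bigl((d-1) - \sum_{j=1}^{d-1}\chi_d^j(\gamma-\alpha)\Bigr),
\]
the covering condition implies $S(d;q,m) = 0$.

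Next I would expand the product and estimate. The term obtained by choosing the constant $(d-1)$ in every factor contributes the main term $(d-1)^m(q-m)$, while each remaining term has the form $c\sum_{\gamma \in \F_q \setminus C_m}\chi_d\bigl(\prod_i (\gamma-\alpha_i)^{j_i}\bigr)$ with distinct $\alpha_i \in C_m$, exponents $1 \le j_i \le d-1$, and $|c|$ bounded in terms of $m$. Because each exponent lies strictly between $0$ and $d$, the polynomial $\prod_i(\gamma-\alpha_i)^{j_i}$ is never a constant multiple of a $d$-th power, so Lemma~\ref{2.2} bounds the corresponding sum over $\F_q$ by $(k-1)\sqrt q$ with $k \le m$; excising the $\gamma \in C_m$ costs at most $m$ per term. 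Since $S(d;q,m)$ is real, summing these at most $d^m$ contributions yields
\[
S(d;q,m) \ge (d-1)^m q - A_m\sqrt q - B_m,
\]
with $A_m, B_m$ depending only on $m$. Thus $S(d;q,m) > 0$ once $q$ exceeds some $M_d$, contradicting $S(d;q,m) = 0$; taking $M = \max\bigl(m^2, \max_{2 \le d \le m} M_d\bigr)$ proves the theorem.

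I expect the main obstacle to be the uniform Weil bookkeeping: one must check for every term in the expansion that the associated polynomial fails to be a $d$-th power (so that Lemma~\ref{2.2} genuinely applies), and then collapse the boundedly many error contributions into a single bound $A_m\sqrt q + B_m$ with constants independent of $q$. A secondary delicate point is the boundary case $d = m$. Here $m(m-1) \mid (q-1)$ forces $(m-1) \mid (q-1)/m$, so $C_m \subseteq C_n$, and a direct count gives $\#\bigcup_{\alpha \in C_m}(C_n + \alpha) \le mn - (m-1) = q$; strengthening this to a strict inequality for \emph{every} such $q$ would require producing an additional coincidence among the translates $C_n + \alpha$ (as the single element $1+\zeta$ did when $m=5$), which is precisely why the uniform character-sum estimate above is the more robust route for general $m$.
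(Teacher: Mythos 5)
Your proof is correct and follows essentially the same route as the paper: reduce to the finitely many exponents $n=1+(q-1)/d$ with $2\le d\le m$, encode the covering condition in the character sum $S(d;q,m)$, expand, and apply Lemma \ref{2.2} to each non-constant term to force $S(d;q,m)>0$ for large $q$. The one divergence is the case $d=m$: the paper's proof of Theorem \ref{3.1} disposes of it by the counting argument (using $C_m\subseteq C_n$ and the extra coincidence $1+\zeta\in(C_n+1)\cap(C_n+\zeta)$), whereas you fold it into the uniform character-sum estimate; your instinct that this is the more robust choice is vindicated by the paper itself, whose proof of the more general Theorem \ref{3.2} remarks that the case $d=|A|$ cannot be excluded by counting and handles it exactly as you do. Your version is also slightly more careful on two small points the paper glosses over: the explicit hypothesis $q>m^2$ needed to conclude $d\le m$ from $n\ge q/m$, and the verification that each polynomial $\prod_i(\gamma-\alpha_i)^{j_i}$ with exponents in $\{1,\dots,d-1\}$ is not a constant multiple of a $d$-th power, so that Lemma \ref{2.2} genuinely applies; leaving the constants as unspecified $A_m,B_m$ rather than computing $(2^{m-1}(m-2)+1)$ and $2^m m$ is harmless, since the theorem only asserts the existence of $M$.
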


\begin{proof}
Since $(m-1)\mid (q-1)$, one checks that $C_m=\{0,1,\zeta,\cdots,\zeta^{m-2}\}$, where $\zeta$ is a primitive $m-1$-th root over $\F_q$. Thus, General Problem is equivalent to finding pairs $(n,q)$ for which every element $\gamma$ of $\F_q$ can be written as $\alpha+\beta$, where $\alpha\in C_m$ and $\beta\in C_n$, i.e.,
\begin{equation}\bigcup_{\alpha\in C_m}(C_n+\alpha)=\F_q.\label{eq5}\end{equation}
Therefore, $n\geq q/m$. It follows that the possible values of $n$ are $1+(q-1)/d$, where $2\leq d\leq m$.

Analogously to the case $m=5$, we first show that $d=m$ cannot occur. Indeed, since $(m-1,m)=1$, we get $(m-1)\mid (q-1)/m$, and so it follows that $C_m\in C_n$. Since $0\in C_n$, given any $0\neq \alpha\in C_m$, we have $\alpha\in C_n\cap(C_n+\alpha)$. Furthermore, $1+\zeta\in (C_n+1)\cap(C_n+\zeta)$. Thus,
\[{\rm card}[\bigcup_{\alpha\in C_m}(C_n+\alpha)]\leq mn-m=q-1<q.\]
As a result, \eqref{eq5} cannot hold in this case.

Now, suppose $2\leq d\leq m-1$, and $C_n$ is the set of all $d$-th power in $\F_q$. It is easy to verify that, for any $\gamma\in F_q/C_m$, at least one of $\gamma-\alpha$, $\alpha\in C_m$ is a nonzero $d$-th power. Let $\chi_d$ be a multiplicative character over $\F_q$, then the real-value function
\[\lambda(\gamma)=(d-1)-(\chi_d(\gamma)+\chi_d(\gamma^2)+\cdots+\chi_d(\gamma^{d-1})).\]
is equal to $0$ if and only if $\gamma\in C_n/\{0\}$.

Setting
\[S(d;q,m):=\sum_{\gamma\in \F_q/C_m}\prod_{\alpha\in C_m}\lambda(\gamma-\alpha),\]
then \eqref{eq5} guarantees that $S(d;q,m)=0$. Expanding the formula, we may write
\[S(d;q,m)=\sum_{X\subseteq C_m}(d-1)^{m-|X|}\cdot(-1)^{|X|}\sum_{\gamma\in \F_q/C_m}\prod_{\alpha\in X}(\chi_d(\gamma-\alpha)+\chi_d((\gamma-\alpha)^2)\cdots+\chi_d((\gamma-\alpha)^{d-1})).\]
For any $\emptyset\neq X\subseteq C_m$, Lemma \ref{2.2} assures that
\[\begin{aligned}
\Biggl|\sum_{\gamma\in \F_q/C_m}\prod_{\alpha\in X}&(\chi_d(\gamma-\alpha)+\chi_d((\gamma-\alpha)^2)\cdots+\chi_d((\gamma-\alpha)^{d-1}))\Biggr|\\
&\leq \Biggl|\sum_{\gamma\in \F_q}\prod_{\alpha\in X}(\chi_d(\gamma-\alpha)+\chi_d((\gamma-\alpha)^2)\cdots+\chi_d((\gamma-\alpha)^{d-1}))\Biggr|+(d-1)^{|X|}m\\
&\leq (d-1)^{|X|}(|X|-1)\sqrt{q}+(d-1)^{|X|}m\\
&=(d-1)^{|X|}\left((|X|-1)\sqrt{q}+m\right).
\end{aligned}\]
Since it is obvious that $S(d;q,m)$ is real, we discover 
\[\begin{aligned}
S(d;q,m)&\geq (d-1)^m(q-m)-\sum_{i=1}^m\binom{m}{i}(d-1)^{m-i}\cdot (d-1)^i\left((i-1)\sqrt{q}+m\right)\\
&=(d-1)^m\left(q-(2^{m-1}(m-2)+1)\sqrt{q}-2^mm\right).
\end{aligned}\]
Taking $M=(2^mm)^2$, we then arrive at $S(d;q,m)>0$ whenever $q>M$. This completes the proof.
\end{proof}

In fact, the proof above does not rely on any other properties of the set $C_m$, just only on its cardinality, of course, over $\F_q$. This naturally leads to the following more general result.

\medskip

\begin{thm}
Suppose $A\subseteq \F_q$, then there exists a positive integer $M$ such that not every element of $\F_q$ can be the sum of an element in $A$ and an $n$-potent whenever $q>M$. More specifically, we can take $M=(2^{|A|}|A|)^2$.\label{3.2}
\end{thm}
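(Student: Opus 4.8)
The plan is to mimic the proof of Theorem \ref{3.1} almost verbatim, replacing the explicit set $C_m$ everywhere by the arbitrary set $A\subseteq\F_q$ and tracking only its cardinality $|A|=:k$. The remark preceding this statement already signals that nothing in the previous argument used the algebraic structure of $C_m$, so the strategy is to isolate exactly which numerical quantities enter the estimate and re-derive the bound in terms of $k$. I would fix $d$ with $2\le d\le k-1$ and let $C_n$ be the set of $d$-th powers, so that, just as before, the condition ``every $\gamma$ is a sum of an element of $A$ and an $n$-potent'' forces, for each $\gamma\notin A$, at least one of the differences $\gamma-\alpha$ with $\alpha\in A$ to be a nonzero $d$-th power. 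Using the same real-valued indicator
\[
\lambda(\gamma)=(d-1)-\bigl(\chi_d(\gamma)+\chi_d(\gamma^2)+\cdots+\chi_d(\gamma^{d-1})\bigr),
\]
which vanishes exactly on $C_n\setminus\{0\}$, I would define
\[
S(d;q,A)=\sum_{\gamma\in\F_q\setminus A}\ \prod_{\alpha\in A}\lambda(\gamma-\alpha),
\]
and observe that \eqref{eq5} (with $A$ in place of $C_m$) forces $S(d;q,A)=0$.

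Next I would expand the product over subsets $X\subseteq A$ exactly as in the previous proof, obtaining a sum of terms indexed by $X$ with coefficient $(d-1)^{k-|X|}(-1)^{|X|}$. For each nonempty $X$, Lemma \ref{2.2} applied to the polynomial $\prod_{\alpha\in X}(\gamma-\alpha)$ (whose splitting field has exactly $|X|$ distinct roots, and which is not a $d$-th power for $2\le d\le k-1$) gives the character-sum bound $(d-1)^{|X|}\bigl((|X|-1)\sqrt{q}+k\bigr)$, where the additive $k$ absorbs the error from summing over $\F_q$ rather than $\F_q\setminus A$. Summing over all subsets by size $i$ with the binomial weight $\binom{k}{i}$ and using $\sum_i\binom{k}{i}=2^k$ and $\sum_i\binom{k}{i}(i-1)$-type identities, I would arrive at a lower bound of the shape
\[
S(d;q,A)\ \ge\ (d-1)^k\Bigl(q-\bigl(2^{k-1}(k-2)+1\bigr)\sqrt{q}-2^k k\Bigr),
\]
which is strictly positive once $q$ exceeds an explicit threshold such as $M=(2^k k)^2$. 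This contradicts $S(d;q,A)=0$, so the decomposition cannot hold for large $q$.

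There is one genuinely new wrinkle that the $C_m$ argument hid: when $A$ is arbitrary, the reduction ``$n\ge q/|A|$ and $n-1=(q-1)/d$ with $2\le d\le m$'' is no longer automatic, and in particular the case $d=k$ (the analogue of the suppressed $d=m$ case) must be re-examined, since the old proof of that case relied on $C_m$ containing $0,1,\zeta$ with the specific coincidence $1+\zeta\in(C_n+1)\cap(C_n+\zeta)$. For a general $A$ no such overlaps are guaranteed, so the clean cardinality count $kn-k<q$ may fail. The cleanest fix is to note that we are free to choose $d$: the only way the covering \eqref{eq5} can succeed at all is $n\ge q/k$, hence $d\le k$, and it suffices to exhibit a single admissible $d$ in the range $2\le d\le k-1$ for which $S(d;q,A)>0$ at large $q$; such a $d$ always exists once $k\ge 3$, and the degenerate small-$k$ situations (where $A$ is too small to cover $\F_q$ for any $n<q$) are handled trivially since then $|C_n+A|\le kn<q$ outright. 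I expect this bookkeeping around the boundary value $d=k$ and the small-$|A|$ degeneracies to be the main obstacle; the central character-sum estimate is a routine transcription of the preceding proof.
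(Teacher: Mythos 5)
Your central character-sum computation (the definition of $S(d;q,A)$, the subset expansion, the application of Lemma \ref{2.2}, and the final bound $S(d;q,A)\geq (d-1)^{|A|}\bigl(q-(2^{|A|-1}(|A|-2)+1)\sqrt{q}-2^{|A|}|A|\bigr)$ with $M=(2^{|A|}|A|)^2$) coincides with the paper's proof. The genuine gap is your treatment of the boundary case $d=|A|$. The paper disposes of it in one sentence: ``here we cannot exclude the case $d=|A|$, but this does not affect our proof,'' meaning the \emph{same} character-sum estimate is run for every $d$ with $2\leq d\leq |A|$, including $d=|A|$. Nothing in Lemma \ref{2.2} requires $d\leq |A|-1$: the only hypothesis is that $\prod_{\alpha\in X}(x-\alpha)$ is not a constant multiple of a $d$-th power, and a squarefree polynomial of positive degree is never a $d$-th power for any $d\geq 2$. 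Your parenthetical ``not a $d$-th power for $2\le d\le k-1$'' imports a restriction that does not exist, and it is the root of the trouble.

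Your proposed fix for $d=k$ does not work. You say ``we are free to choose $d$'' and that it suffices to exhibit a single $d$ in $\{2,\dots,k-1\}$ with $S(d;q,A)>0$; but $d$ is not a free parameter. Each admissible exponent $n$ (with $(n-1)\mid(q-1)$ and $n\geq q/|A|$) determines its own $d=(q-1)/(n-1)$, and the theorem must rule out the covering \eqref{eq6} for \emph{every} such $n$. The quantity $S(d;q,A)$ detects membership in the set of $d$-th powers, i.e.\ in $C_{1+(q-1)/d}$, so positivity of $S(d;q,A)$ for some $d\leq k-1$ says nothing about whether $\bigcup_{\alpha\in A}(C_n+\alpha)=\F_q$ when $n=1+(q-1)/k$; it is a non sequitur. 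Your fallback for small $|A|$ is also false: when $d=k=|A|$ one has $kn=k+(q-1)\geq q$, so the crude count $|C_n+A|\leq kn<q$ fails, and when $k=2$ your range $2\leq d\leq k-1$ is empty, so your argument then covers no case at all. The correct repair is simply to run your own estimate at $d=k$ as well --- which is exactly what the paper does.
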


\begin{proof}
As in the proof of Theorem \ref{3.1}, more concretely, it suffices to replace there $C_m$ with $A$. We, thereby, rewrite \eqref{eq5} as
\begin{equation}
\bigcup_{\alpha\in A}(C_n+\alpha)=\F_q.\label{eq6}
\end{equation}
Therefore, $n\geq q/|A|$ and so $n=1+(q-1)/d$, where $2\leq d\leq |A|$. However, we cannot exclude here the case $d=|A|$, but this does not affect our proof.

For convenience of writing, we continue to use the notation from the above proof. Letting
\[S(d;q,A):=\sum_{\gamma\in\F_q/A}\prod_{\alpha\in A}\lambda(\gamma-\alpha),\]
we then observe that \eqref{eq6} insures that $S(d;q,A)=0$. Expanding the expression, we can write
\[S(d;q,A)=\sum_{X\subseteq A}(d-1)^{|A|-|X|}\cdot(-1)^{|X|}\sum_{\gamma\in \F_q/A}\prod_{\alpha\in X}(\chi_d(\gamma-\alpha)+\chi_d((\gamma-\alpha)^2)\cdots+\chi_d((\gamma-\alpha)^{d-1})).\]
For any $\emptyset\neq X\subseteq A$, Lemma \ref{2.2} ensures that
\[\begin{aligned}
\Biggl|\sum_{\gamma\in \F_q/A}\prod_{\alpha\in X}&(\chi_d(\gamma-\alpha)+\chi_d((\gamma-\alpha)^2)\cdots+\chi_d((\gamma-\alpha)^{d-1}))\Biggr|\\
&\leq \Biggl|\sum_{\gamma\in \F_q}\prod_{\alpha\in X}(\chi_d(\gamma-\alpha)+\chi_d((\gamma-\alpha)^2)\cdots+\chi_d((\gamma-\alpha)^{d-1}))\Biggr|+(d-1)^{|X|}|A|\\
&\leq (d-1)^{|X|}(|X|-1)\sqrt{q}+(d-1)^{|X|}|A|\\
&=(d-1)^{|X|}\left((|X|-1)\sqrt{q}+|A|\right).
\end{aligned}\]
Since it is obvious that $S(d;q,A)$ is also real, we detect
\[\begin{aligned}
S(d;q,A)&\geq (d-1)^{|A|}(q-|A|)-\sum_{i=1}^{|A|}\binom{|A|}{i}(d-1)^{|A|-i}\cdot (d-1)^i\left((i-1)\sqrt{q}+|A|\right)\\
&=(d-1)^{|A|}\left(q-(2^{|A|-1}(|A|-2)+1)\sqrt{q}-2^{|A|}|A|\right).
\end{aligned}\]
Taking $M=(2^{|A|}|A|)^2$, we then perceive $S(d;q,A)>0$ whenever $q>M$. This finishes the proof.
\end{proof}

We can now give an elementary answer to \cite[Problem 3.2]{Cohen} which problem sounds thus: {\it Describe those finite fields whose elements are sum of a potent plus a $3$-potent plus a $4$-potent.}

\medskip

\begin{prop}
Suppose $q>2^{20}\cdot 10^2=104857600$, then not every element of $\F_q$ can be the sum of a potent plus a $3$-potent plus a $4$-potent.
\end{prop}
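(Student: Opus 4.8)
The plan is to absorb the $3$-potent and the $4$-potent into a single subset of $\F_q$ and then apply Theorem \ref{3.2}. Write $C_3$ and $C_4$ for the sets of $3$-potents and $4$-potents, and let $C_n$ denote the remaining, unconstrained, potent. Every element of the form ``potent $+$ $3$-potent $+$ $4$-potent'' lies in $C_n + (C_3 + C_4)$, where $C_3 + C_4 = \{\beta + \gamma : \beta \in C_3,\ \gamma \in C_4\}$ is the sumset. Setting $A = C_3 + C_4$, the requirement that every element of $\F_q$ be such a sum is exactly $\bigcup_{\alpha \in A}(C_n + \alpha) = \F_q$, which is hypothesis \eqref{eq6} of Theorem \ref{3.2}. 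The problem thus reduces to controlling $|A|$, after which Theorem \ref{3.2} furnishes the threshold $M = (2^{|A|}|A|)^2$.

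The key step is the uniform bound $|A| = |C_3 + C_4| \leq 10$. Since $C_3 = \{0, \pm 1\}$ and $C_4 = \{0\} \cup \{\xi \in \F_q : \xi^3 = 1\}$, we have $|C_3| \leq 3$ and $|C_4| \leq 4$, so a priori $|A| \leq 12$. However, two coincidences occur for every $q$: the pairs $(1,0)$ and $(0,1)$ both yield the sum $1$, while $(0,0)$ and $(-1,1)$ both yield the sum $0$. These two collapses reduce the count from $12$ to $10$, giving $|A| \leq 10$. In the degenerate cases (characteristic $2$, or $3 \nmid q-1$, where $C_4 = \{0,1\}$) the bound is even smaller, so $10$ is the worst case, attained when $\F_q$ has odd characteristic and $3 \mid q-1$.

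With $|A| \leq 10$, the conclusion follows at once. As $k \mapsto (2^k k)^2$ is increasing, the threshold supplied by Theorem \ref{3.2} satisfies $(2^{|A|}|A|)^2 \leq (2^{10}\cdot 10)^2 = 2^{20}\cdot 10^2 = 104857600$. Hence for every $q > 104857600$ we have $\bigcup_{\alpha \in A}(C_n + \alpha) \neq \F_q$ for every potent $C_n$, i.e. not every element of $\F_q$ is a sum of a potent, a $3$-potent, and a $4$-potent. The main obstacle is the sumset count itself: one must check that the only coincidences among the twelve pairwise sums that persist for all $q$ are the two identified above, so that $|A| \leq 10$ holds uniformly and no larger value of $|A|$ — which would push $M$ past the stated bound — can arise.
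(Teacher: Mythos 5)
Your proposal is correct and takes essentially the same route as the paper: the paper's proof is exactly the observation that $A=C_3+C_4$ has at most $10$ elements followed by an appeal to Theorem \ref{3.2}, which is what you do. Your explicit verification of the sumset bound (the two collisions $0+1=1+0$ and $0+0=(-1)+1$ cutting $12$ down to $10$, with monotonicity of $k\mapsto(2^k k)^2$ handling smaller $|A|$) merely fills in details the paper leaves implicit; note only that you need \emph{at least} two coincidences for the upper bound, not that these are the \emph{only} ones.
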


\begin{proof}
Noticing that the set $C_3+C_4$ has at most 10 elements, then it follows immediately from Theorem \ref{3.2}, as expected.
\end{proof}

\section{An improvement on the Upper Bound of $q$ for Large $d$}

In this section, we somewhat improve the upper bound given in Theorem \ref{3.2} via the standard "inclusion-exclusion principle". It is quite natural to expect that the larger value of $d$ is the smaller upper bound of $q$ that could be obtained. Precisely, we are ready to establish the following result.

\medskip

\begin{thm}
Suppose $A\subseteq \F_q$ and $n=1+(q-1)/d$. If $d\geq 2|A|/3$ and every element of $\F_q$ can be written as the sum of an element in $A$ and an $n$-potent, then
\[q\leq 16|A|^6.\]\label{4.1}
\end{thm}

If \eqref{eq6} is fulfilled, then we obviously have
\[\left|\bigcup_{\alpha\in A}(C_n+\alpha)\right|=q.\]
Resulting, the upper bound of the LHS of the above equation must be at least $q$. In virtue of the classical inclusion-exclusion principle, we estimate
\[\begin{aligned}\left|\bigcup_{\alpha\in A}(C_n+\alpha)\right|\leq mn&-\sum_{\alpha_1\neq\alpha_2}|(C_n+\alpha_1)\cap(C_n+\alpha_2)|\\
&+\sum_{\{\alpha_1,\alpha_2,\alpha_3\}\subset X}|(C_n+\alpha_1)\cap(C_n+\alpha_2)\cap(C_n+\alpha_3)|.\end{aligned}\]
We can now estimate the cardinality of the intersection via multiplicative character. To that goal, let $\chi_d$ be a multiplicative character of order $d$ and writing
\[\rho(x)=1+\chi_d(x)+\cdots+\chi_d^{d-1}(x),\]
we then obtain
\[\rho(x)=\begin{cases}
d&x\in C_n/\{0\},\\
1&x=0,\\
0&x\in \F_q/C_n.
\end{cases}\]

So, the following two technical claims are valid.

\medskip

\begin{lem}
We denote $|(C_n+\alpha_1)\cap(C_n+\alpha_2)|$ by $E(\alpha_1,\alpha_2)$. Then, for any $\alpha_1\neq \alpha_2\in A$, we have
\[\left|E(\alpha_1,\alpha_2)-\frac{q}{d^2}\right|\leq 4\sqrt{q}.\]
\label{4.2}
\end{lem}

\begin{proof}
Let $c:=\alpha_2-\alpha_1$, then $E(\alpha_1,\alpha_2)=E(0,c)$. Furthermore, we write
\[\begin{aligned}
E(0,c)&=\frac{1}{d^2}\sum_{x\neq0,-c}\rho(x)\rho(x+c)+\frac{1}{d}(\rho(c)+\rho(-c))\\
&=\frac{1}{d^2}\sum_{x\neq0,-c}(1+\chi(x)+\cdots+\chi^{d-1}(x))(1+\chi(x+c)+\cdots+\chi^{d-1}(x+c))+\frac{1}{d}(\rho(c)+\rho(-c))\\
&=\frac{q}{d^2}+\frac{1}{d^2}\sum_{1\leq i,j\leq d-1}\sum_{x\neq0,-c}\chi^i(x)\chi^j(x+c)+\frac{1}{d}(\rho(c)+\rho(-c))-\frac{2}{d^2}.
\end{aligned}\]
By Weil bound, we arrive at
\[\left|\sum_{x\neq0,-c}\chi^i(x)\chi^j(x+c)\right|\leq \sqrt{q}+2.\]
Thus,
\begin{equation}\left|E(0,c)-\frac{q}{d^2}\right|\leq\frac{(d-1)^2}{d^2}(\sqrt{q}+2)+3\end{equation}
Since $q\geq 3$, it must be that $3\sqrt{q}>5$ and so
\[\left|E(0,c)-\frac{q}{d^2}\right|\leq 4\sqrt{q},\]
as asserted.
\end{proof}

\medskip

\begin{lem}
We denote $|(C_n+\alpha_1)\cap(C_n+\alpha_2)\cap(C_n+\alpha_3)|$ by $E(\alpha_1,\alpha_2,\alpha_3)$. Then, for any $\{\alpha_1,\alpha_2,\alpha_3\}\subset A$, we have
\[\left|E(\alpha_1,\alpha_2,\alpha_3)-\frac{q}{d^3}\right|\leq 6\sqrt{q}.\]
\label{4.3}
\end{lem}

\begin{proof}
Similarly, we write
\[\begin{aligned}
E(\alpha_1,\alpha_2,\alpha_3)&=\frac{1}{d^3}\sum_{x\notin\{-\alpha_1,-\alpha_2,-\alpha_3\}}\lambda(x+\alpha_1)\lambda(x+\alpha_2)\lambda(x+\alpha_3)+R_1\\
&=\frac{q}{d^3}+\frac{1}{d^3}\sum_{1\leq i,j,k\leq d-1}\sum_{x\notin\{-\alpha_1,-\alpha_2,-\alpha_3\}}\chi^i(x+a_1)\chi^j(x+a_2)\chi^k(x+a_3)+R_1-\frac{3}{d^3},
\end{aligned}\]
where we put
\[R_1:=\frac{1}{d^2}(\lambda(\alpha_2-\alpha_1)\lambda(\alpha_3-\alpha_1)+\lambda(\alpha_1-a_2)\lambda(\alpha_3-\alpha_2)+\lambda(\alpha_1-\alpha_3)\lambda(\alpha_2-\alpha_3)).\]
Likewise, by Weil bound, we evaluate
\[\left|\sum_{x\notin\{-\alpha_1,-\alpha_2,-\alpha_3\}}\chi^i(x+\alpha_1)\chi^j(x+\alpha_2)\chi^k(x+\alpha_3)\right|\leq2\sqrt{q}+3.\]
Thus,
\[\left|E(a_1,a_2,a_3)-\frac{q}{d^3}\right|\leq \frac{(d-1)^3}{d^3}(2\sqrt{q}+3)+2.\]
However, as $q\geq 3$, we then conclude
\[\left|E(a_1,a_2,a_3)-\frac{q}{d^3}\right|\leq 6\sqrt{q},\]
as claimed.
\end{proof}

Now, we are in a position to prove Theorem \ref{4.1}. To that end, combining Lemma \ref{4.2}, Lemma \ref{4.3} and $n\leq q/d+1$, we derive
\[\begin{aligned}\left|\bigcup_{a\in A}(C_n+a)\right|&\leq \left(\frac{|A|}{d}-\binom{|A|}{2}\frac{1}{d^2}+\binom{|A|}{3}\frac{1}{d^3}\right)q+|A|^2(|A|-1)\sqrt{q}+|A|\\
&\leq \left(\frac{|A|}{d}-\binom{|A|}{2}\frac{1}{d^2}+\binom{|A|}{3}\frac{1}{d^3}\right)q+|A|^3\sqrt{q}.\end{aligned}\]
It is now routine to inspect that
\[f(x)={|A|}x-\binom{|A|}{2}x^2+\binom{|A|}{3}x^3\]
is an increasing function when $0<x\leq 1/2$. So, if $d\geq 2|A|/3$, then 
\[\frac{|A|}{d}-\binom{|A|}{2}\frac{1}{d^2}+\binom{|A|}{3}\frac{1}{d^3}<1.\]
Letting
\[c_1=1-\left(\frac{|A|}{d}-\binom{|A|}{2}\frac{1}{d^2}+\binom{|A|}{3}\frac{1}{d^3}\right),\]
we then come to $c_1>1/4$ and, consequently,
\[\sqrt{q}< 4|A|^3.\]
It is much smaller than $2^{|A|}|A|$, as promised, thus concluding the proof of Theorem \ref{4.1} after all.

\medskip

Moreover, it is worth mentioning that if we apply the "full inclusion-exclusion principle" to make the estimate, we can only obtain an exponential upper bound. It is, however, even weaker than the result obtained in Theorem \ref{3.2}. That is why, the challenging problem of how to improve the upper bound of $q$ for $A$ may be interesting.

\section{Further Problem}

As previously mentioned in Section \ref{3}, the proof of the finiteness of the number of fields does {\it not} depend on any algebraic structure of $C_m$, so the upper bound obtained above is quite rough. This logically leads to the following question.

\begin{prob}
Does the algebraic structure of $C_m$ give a more refined description of $(q,n)$ that could improve the upper bound on $q$?
\end{prob}

Besides, for any general subset $A$ of $\F_q$, the upper bound obtained in Theorem \ref{3.2} and Theorem \ref{4.1} might eventually be improved further.

\appendix
\renewcommand{\thesection}{Appendix \Alph{section}}

\section{Pari-gp code(given by S.D. Cohen et al.)}

This section cites the Pari/GP algorithm provided by S.D. Cohen {\it et. al.} in \cite{Cohen}.

The following pari-gp function receives the order $q$ of a finite field and returns a list with all its elements.

\begin{framed}
\begin{verbatim}
ff_elements(q)={
  my(f,r);                       /* local variables */
  f=vector(q);                   /* will hold all field elements */
  r=ffprimroot(ffgen(q));        /* multiplicative group generator */
  f[1]=r-r;                      /* store zero */
  f[2]=r^0;                      /* store one */
  for(k=3,q,f[k]=r*f[k-1];);     /* store the other field elements */
  return(Set(f));               /* return the sorted list */
};
\end{verbatim}
\end{framed}

The following pari-gp function receives a list of all elements of a finite field and an $n$ value and returns the list of all $n$-potents.

\begin{framed}
\begin{verbatim}
n_potents(f,n)={
  my(l,nl);                      /* local variables */
  l=vector(#f);                  /* will hold the list of n-potents */
  nl=0;                          /* number of n_potents */
  for(k=1,#f,
    if(f[k]==f[k]^n,             /* test if f[k] is an n-potent */
      nl=nl+1;                   /* yes, add it to the list */
      l[nl]=f[k];
    );
  );
  l=vector(nl,k,l[k]);           /* trim list */
  return(Set(l));                /* return the sorted list */
};
\end{verbatim}
\end{framed}

The following pari-gp function receives the order q of a finite field and an n value and checks if there is any $k$ for which all the elements of the finite field are the sum of an $n$-potent and a $k$-potent. It prints the tuple $(q,n,k)$ when that is so. As stated in the introduction, it is only necessary to test $k$ values for which $(k-1)\mid(q-1)$.

\begin{framed}
\begin{verbatim}
check_one(q,n)={
  my(f,k,pn,pk,s);             /* local variables */
  f=ff_elements(q);            /* the finite field elements */
  pn=n_potents(f,n);           /* the set of n-potents */
  fordiv(q-1,d,                /* try all divisors of q-1 */
    k=d+1;                     /* the other potent exponent */
    if(d<q-1,                  /* check only proper divisors */
      pk=n_potents(f,k);       /* the set of k-potents */
      if(#pn+#pk>=#f,          /* do we have enough potents? */
        s=setbinop((x,y)->x+y,pn,pk); /* yes, add the two sets */
        if(#s==q,              /* all elements? */
          printf("%d %d %d\n",q,n,k); /* yes, print relevant data */
        );
      );
    );
  );
};
\end{verbatim}
\end{framed}

Finally, the following pari-gp function searches for tuples $(q,n,k)$ for $q$ values up to a given limit, and for a given value of $n$.

\begin{framed}
\begin{verbatim}
check_all(n=4,limit=10^3)={
  my(k,dt);                    /* local variables */
  dt=getabstime();             /* start measuring execution time */
  forprime(p=2,limit,          /* try all primes <= limit */
    k=1;                       /* initial exponent */
    while(p^k<=limit,          /* try all prime powers <= limit */
      q=p^k;                   /* number of finite field elements */
      check_one(q,n);          /* check this prime power */
      k++;                     /* next exponent */
    );
  );
  dt=getabstime()-dt;          /* measure execution time */
  printf("done in %.1fs\n",0.001*dt); /* report execution time */
};
\end{verbatim}
\end{framed}

The statement of Conjecture \ref{conj1} was done by running the code
\begin{framed}
\begin{verbatim}
check_all(5,10000);
\end{verbatim}
\end{framed}
\noindent This produced the pairs $(q,n)$ reported in Conjecture \ref{conj1}.

\section*{Acknowledgment}
This work was supported by Natural Science Foundation of Beijing Municipal(M23017).

\section*{Data availability}
No data was used for the research described in the article

\vskip3.0pc

\end{document}